\newtheorem{theorem}{Theorem}[section]
\newtheorem{corollary}{Corollary}[theorem]
\newtheorem{lemma}[theorem]{Lemma}
\newtheorem*{remark}{Remark}
\DeclarePairedDelimiter{\ceil}{\lceil}{\rceil}
\DeclarePairedDelimiter{\floor}{\lfloor}{\rfloor}
\title{A Note on Bounded Biclique Coverings of Complete Graphs}
\author{Brian Gu}
\date{August 2015}
\begin{document}

\maketitle

\begin{abstract}
    An undirected biclique $K_{a,b}$ is a graph with vertices partitioned into two sets: a set $A$ containing $a$ vertices and a set $B$ containing $b$ vertices such that every vertex in set $A$ is connected to every vertex in set $B$, and such that no two vertices in the same set have an edge between them. A well-known result is that a minimum of $\lceil \log_2{n} \rceil$ bicliques graphs of any size are needed to edge-cover the complete graph on $n$ vertices. We prove a lower bound on minimum vertex-weighted biclique coverings of the complete graph $n$, and use this to prove an asymptotic formula for the minimum number of bicliques $K_{x,x}$ with bounded component size needed to cover the complete graph on $n$ vertices.
\end{abstract}

\section{Introduction}

An \textit{undirected biclique} is a bipartite graph where every vertex in the first set is connected to every vertex in the second set. We use the notation $K_{a,b}$ to denote a biclique where the first set has $a$ vertices and the second set has $b$ vertices. The \textit{size} of the biclique $K_{a,b}$ is defined as $a+b$. Call the two \textit{components} of the biclique $K_{a,b}$ the independent sets of the biclique between which every edge exists.

The main result of this note is the following: 

\begin{theorem}
\label{mainresult}
If $m$ is the minimum number of bicliques with component size at most $x$ needed to cover a complete graph $K_n$, then $m = \Theta \left( \left(\frac{n}{x}\right)^2 + \left(\frac{n}{x}\right)\log{x} \right)$.
\end{theorem}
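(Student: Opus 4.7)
The plan is to prove matching upper and lower bounds of order $(n/x)^2 + (n/x)\log x$, with the upper bound coming from a direct construction and the lower bound coming from combining an elementary edge count with the vertex-weighted covering bound established earlier in the paper.

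For the upper bound, I would partition the vertex set of $K_n$ into $k = \lceil n/x \rceil$ groups $V_1, \dots, V_k$, each of size at most $x$. For each pair $\{i,j\}$ with $i \ne j$, the edges between $V_i$ and $V_j$ form one biclique with both components of size at most $x$; this contributes $\binom{k}{2} = O((n/x)^2)$ bicliques. For each $V_i$, I would use the classical $\lceil \log_2 |V_i|\rceil$-biclique covering of $K_{|V_i|}$ (label each vertex by a binary string and let the $j$-th biclique split vertices by their $j$-th bit); since $|V_i| \le x$, every component has size at most $\lceil x/2 \rceil \le x$, so each biclique is admissible. Summing over $i$ adds $O((n/x)\log x)$ further bicliques, giving the desired total.

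For the lower bound I would combine two independent estimates and take their maximum, which lies within a factor of two of the claimed sum. The first is an edge count: a biclique with both components bounded by $x$ covers at most $x^2$ edges, while $K_n$ has $\binom{n}{2} = \Theta(n^2)$ edges, so every valid cover uses $\Omega((n/x)^2)$ bicliques. The second invokes the earlier vertex-weighted lower bound with uniform unit weights, which yields $\sum_{i=1}^m (|A_i| + |B_i|) = \Omega(n \log n)$ for any biclique cover $(A_i, B_i)_{i=1}^m$ of $K_n$; since each $|A_i| + |B_i| \le 2x$, this forces $m = \Omega\bigl((n\log n)/x\bigr)$, which is at least $\Omega\bigl((n/x)\log x\bigr)$ whenever $x \le n$ (since then $\log n \ge \log x$).

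The step I expect to be the main obstacle is invoking the earlier vertex-weighted lemma in precisely the right form: I need a bound on $\sum_i (|A_i| + |B_i|)$ rather than on $m$ itself, and the rate must be $\Omega(n\log n)$ so that dividing by the per-biclique size bound $2x$ recovers the correct $\log x$ factor. Once the earlier section delivers such a statement (the uniform-weight specialization of the vertex-weighted theorem promised in the abstract), the rest of the argument reduces to a standard construction, a one-line counting estimate, and the elementary observation that $\max(a,b) \ge (a+b)/2$ when combining the two lower bounds.
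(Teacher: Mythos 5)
Your proposal is correct and follows essentially the same route as the paper: the lower bound combines the same edge count with the sum-of-sizes bound $\sum_i(|A_i|+|B_i|) \ge n\log n$ from Lemma \ref{bccover} (divided by the per-biclique size $2x$), and the upper bound uses the same partition-into-groups construction with inter-group bicliques plus logarithmically many intra-group bicliques. The only difference is cosmetic: you use groups of size $x$ with one biclique per pair of groups, whereas the paper uses groups of size $2x$ with four $K_{x,x}$ per pair and Corollary \ref{equalcover} to keep every biclique exactly balanced; since the theorem only requires component size \emph{at most} $x$, your simpler variant suffices (your parenthetical claim that each intra-group component has size at most $\ceil*{x/2}$ is not quite right for the bitstring construction, but all that is needed is the trivial bound $\le x$).
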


By convention, all logarithms are taken base $2$.

\section{Proof}

We prove the asymptotic expression in several pieces. First, we show that both $\left(\frac{n}{x}\right)^2$ and $\left(\frac{n}{x}\right)\log{x}$ are asymptotic lower bounds on the number of bicliques needed for a complete edge-covering. 

The first lower bound is easily shown by counting edges.

\begin{lemma}
\label{n2x2lower}
If $m$ is the number of bicliques with component sizes $x$ needed to edge-cover a complete graph on $n$ vertices, then
\[m \ge \frac{n(n-1)}{2x^2}.\]
\end{lemma}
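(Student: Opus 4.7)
The plan is a direct double-counting of edges, which is the simplest possible tool for this kind of covering lower bound. The complete graph $K_n$ has exactly $\binom{n}{2} = \frac{n(n-1)}{2}$ edges, and each biclique in the cover contributes at most $x^2$ edges. Since every edge of $K_n$ must appear in at least one member of the cover, the total edge contribution from the $m$ bicliques, summed with multiplicity, is at least $\binom{n}{2}$. This immediately gives $m x^2 \ge \frac{n(n-1)}{2}$, which rearranges to the claimed bound.

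Concretely, I would execute the argument in three small steps. First, fix any valid cover $B_1, \dots, B_m$ of $K_n$ by bicliques with component sizes at most $x$, and write $e(B_i)$ for the number of edges in $B_i$. Second, observe that $e(B_i) \le x \cdot x = x^2$, with the maximum attained only at $K_{x,x}$; this is the only place the component-size bound enters. Third, apply the covering hypothesis: every edge of $K_n$ lies in some $B_i$, so
\[
\binom{n}{2} \;\le\; \sum_{i=1}^{m} e(B_i) \;\le\; m x^2,
\]
and dividing by $x^2$ yields $m \ge \frac{n(n-1)}{2x^2}$.

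There is essentially no obstacle here; the estimate is a one-line pigeonhole and intentionally loose (it ignores edge overlap entirely). Its role in the paper is to supply the $(n/x)^2$ term of Theorem~\ref{mainresult}, which will be complemented by a separate and genuinely more delicate argument giving the $(n/x)\log x$ term.
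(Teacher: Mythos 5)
Your proof is correct and is essentially identical to the paper's own argument: both count edges, bound each biclique's edge count by $x^2$, and divide. The only difference is that you spell out the double-counting inequality $\binom{n}{2} \le \sum_i e(B_i) \le mx^2$ explicitly, which the paper leaves implicit.
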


\begin{proof}
Note that the total number of edges in the complete graph on $n$ vertices is $\frac{n(n-1)}{2}$. Each biclique $K_{x,x}$ has a total of $x^2$ edges. Thus, there must be at least $\frac{n(n-1)}{2x^2}$ bicliques in an edge-covering of $K_n$ using only $K_{x,x}$. 
\end{proof}

To compute the second lower bound, we use the following lemma about the sizes of bicliques.

\begin{lemma}
\label{bccover}
Consider an edge-covering of a complete graph $G$ on $x$ vertices using only bicliques. The sum of the sizes of the bicliques is at least $x\log{x}$.
\end{lemma}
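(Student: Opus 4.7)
The plan is to encode each vertex by the family of binary strings that are consistent with its sides in the biclique cover, and then extract a Kraft-type inequality from the disjointness of these families. Let the cover use bicliques $B_1,\dots,B_k$, and fix once and for all an orientation of each $B_i$ that labels its two components $0$ and $1$. For each vertex $v$, let $d(v)$ denote the number of bicliques containing $v$, and record the partial function $\phi_v\colon\{i : v\in B_i\}\to\{0,1\}$ that assigns to $v$ its side in each biclique that contains it. A double-counting identity immediately gives $\sum_v d(v) = \sum_i(a_i+b_i)$, so it suffices to prove $\sum_v d(v)\geq x\log x$.

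Next I would associate to each $v$ the set $E(v)\subseteq\{0,1\}^k$ of all extensions of $\phi_v$ to a total function on the $k$ coordinates, which has size $2^{k-d(v)}$. The central observation is that the sets $E(v)$ are pairwise disjoint: for any $u\neq v$, the edge $uv$ is covered by some biclique $B_{i^\star}$, which places $u$ and $v$ on opposite components of $B_{i^\star}$, so every extension of $\phi_u$ and every extension of $\phi_v$ disagree in coordinate $i^\star$. This is precisely where the edge-covering hypothesis is used.

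Disjointness inside the $2^k$-element cube yields $\sum_v 2^{k-d(v)}\leq 2^k$, equivalently the Kraft-style inequality $\sum_v 2^{-d(v)}\leq 1$. Applying Jensen's inequality to the convex function $t\mapsto 2^{-t}$, with $\bar d=\frac{1}{x}\sum_v d(v)$, gives $x\cdot 2^{-\bar d}\leq 1$, so $\bar d\geq\log x$ and $\sum_v d(v)\geq x\log x$, which is the claim.

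I expect the delicate step to be the initial packaging: recognizing that the edge-cover condition translates precisely into disjointness of the extension sets, and thereby into a Kraft-style inequality on the quantities $2^{-d(v)}$. Once that reduction is in place, the concluding averaging via Jensen (or equivalently AM--GM on the values $2^{-d(v)}$) is routine, and the double-counting identity connects $\sum_v d(v)$ back to the sum of biclique sizes with no further work.
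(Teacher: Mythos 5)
Your proposal is correct and is essentially the paper's own argument in different notation: your extension sets $E(v)$ of size $2^{k-d(v)}$ are exactly the paper's ``holes'' matching a vertex's $\{0,1,*\}$ row string (with $d(v)=c-s_v$), the disjointness claim is the paper's pigeonhole step, the Jensen/Kraft conclusion is the paper's convexity step, and the double-counting identity $\sum_v d(v)=\sum_i(a_i+b_i)$ is the paper's observation that $cx-s$ counts the non-star entries. No substantive difference in approach.
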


\begin{proof}
Consider a partial covering of the graph $G$ with $c$ bicliques. We show that if the $c$ bicliques fully edge-cover $G$, then the sum of their sizes is at least $x\log{x}$, where $x$ is the number of vertices in $G$. 

Create a matrix $M$ with $x$ rows and $c$ columns, where each row represents a vertex and each column represents a biclique. Enumerate the bicliques $B_1, B_2, \dots, B_c$. For the $i$th biclique, consider the two components; for each vertex $v$ in the first component, assign a $0$ to the entry in the $v$th row and the $i$th column, and for each vertex $u$ in the second component, assign a $1$ to the entry in the $u$th row and the $i$th column. Assign a $*$ to the remaining entries in the column. 

Two vertices have an edge between themselves if there exists a column such that one vertex has a $1$ in the column while the other has a $0$. Call two vertices \textit{distinguishable} if such a column exists, and \textit{indistinguishable} otherwise. This is equivalent to calling two $\{0,1,*\}$ strings \textit{distinguishable} if there exists some index where one string has a $0$ and the other has a $1$, and \textit{indistinguishable} otherwise.

Now we use a pigeonhole argument to show that if every vertex is distinguishable, then $\sum{|B_i|} \ge x\log{x}$.

Consider $2^c$ holes, each representing a unique bitstrings of length $c$. For each vertex $v$, place a copy of vertex $v$ in every hole where its row string--formed by considering the $v$th row of $M$ as a $\{0,1,*\}$ string--is indistinguishable from the hole's bitstring. 

If two vertices are placed in the same hole, then they are indistinguishable and the partial covering is not a complete edge-covering because indistinguishable vertices do not have an edge between them; otherwise, one would have a $0$ at an index where another has a $1$, and they would not be indistinguishable with the same $\{0,1\}$ bitstring. Thus, by the pigeonhole principle, the total number of vertex copies placed in holes must be at most the number of holes itself, $2^c$. 

We now count the number of vertex copies. Suppose that vertex $v$'s row has $s_v$ stars, and that the total number of stars in the matrix is $s$. Then $v$'s bitstring is indistinguishable from $2^{s_v}$ bitstrings of length $c$, so a total of $2^{s_v}$ copies of $v$ are placed in holes. Thus the total number of vertex copies over all vertices is $\sum\limits_{i=1}^x 2^{s_i}$. If the partial covering completely covers $G$, this sum is at most $2^c$. However, by the convexity of the function $2^n$, we know that 
\[x2^{\frac{s}{x}} \le \sum\limits_{i=1}^x 2^{s_i} \le 2^c.\]
Therefore
\[x2^{\frac{s}{x}} \le 2^c.\]
\[2^{\log{x} + \frac{s}{x}} \le 2^c.\]
Taking the logarithm,
\[\log{x} + \frac{s}{x} \le c,\]
which can be rewritten as
\[cx - s \ge x\log{x}.\]

But $cx - s$ is precisely the number of $0$s and $1$s in $M$, and is thus the sum of the sizes of all the bicliques in the complete edge-covering. Thus 
\[\sum{|B_i|} \ge x\log{x},\]
as desired.
\end{proof}

We also use the result on the minimum number of bicliques needed to cover a complete graph. This result is well-known; we were made aware of it through a USA TST problem \cite{AoPS} and a conversation with Evan Chen.

\begin{corollary}
\label{ubcover}
The minimum number of bipartite graphs of any size needed to cover a complete graph $G$ on $x$ vertices is $\ceil*{\log{x}}$.
\end{corollary}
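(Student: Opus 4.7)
The plan is to prove Corollary \ref{ubcover} by establishing matching lower and upper bounds, each of which is short because the hard analytical work has already been done in Lemma \ref{bccover}.

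For the lower bound, I would reuse the matrix/pigeonhole setup from Lemma \ref{bccover}. That lemma yielded the inequality $x \cdot 2^{s/x} \le 2^c$ for any complete edge-covering of $K_x$ by $c$ bicliques, where $s \ge 0$ counts the stars in the matrix $M$. Dropping the nonnegative quantity $s/x$ in the exponent gives $x \le 2^c$, hence $c \ge \log x$, and since $c$ is an integer this forces $c \ge \lceil \log x \rceil$. So no more than a one-line deduction from the previous lemma is needed.

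For the upper bound I would give an explicit construction. Label the $x$ vertices with distinct binary strings of length $\lceil \log x \rceil$ (there are $2^{\lceil \log x \rceil} \ge x$ such strings, so this is possible). For each bit position $i \in \{1, 2, \dots, \lceil \log x \rceil\}$, define a biclique $B_i$ whose first component consists of all vertices whose label has a $0$ in position $i$ and whose second component consists of all vertices whose label has a $1$ in position $i$. Any two distinct vertices have labels differing in at least one bit position $i$, and so the edge between them appears in $B_i$. Thus $\lceil \log x \rceil$ bicliques suffice.

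There is no real obstacle here; the only thing to double-check is that the construction genuinely produces bicliques (each $B_i$ is a complete bipartite graph by construction) and that the edge-count argument covers every pair, which is immediate from distinctness of labels. Combining the two bounds yields the exact value $\lceil \log x \rceil$.
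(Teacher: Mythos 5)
Your proposal is correct and follows essentially the same route as the paper: the identical bitstring-labelling construction for the upper bound, and a lower bound extracted from Lemma \ref{bccover}. The only cosmetic difference is that you drop the star term in the intermediate inequality $x\cdot 2^{s/x}\le 2^c$ from that lemma's proof, whereas the paper divides the lemma's stated conclusion $\sum|B_i|\ge x\log x$ by the maximum biclique size $x$; both yield $c\ge\ceil*{\log x}$.
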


\begin{proof}
By Lemma \ref{bccover}, the sum of the sizes of the bicliques in an edge-covering of $G$ using only bicliques is at least $x\log{x}$. Because every biclique in a graph with only $x$ vertices can have size at most $x$, the number of bicliques needed is at least $\frac{x\log{x}}{x}=\log{x}$. Since an integer number of bicliques must be used, the number of bicliques needed is at least $\ceil*{\log{x}}$.

Now we show that we can use exactly $\ceil*{\log{x}}$ bicliques to edge-cover $G$. Assign to each vertex of $G$ a unique bitstring of length $\ceil*{\log{x}}$. Now consider $\ceil*{\log{x}}$ bicliques, each generated in the following way:
\begin{itemize}
\item For the $i$th biclique, connect every vertex with a $0$ in the $i$th index of the vertex's bitstring to every vertex with a $1$ in the $i$th index.
\end{itemize}
Since every bitstring is distinct, for every pair of vertices there exists some index where their digits differ. Thus, every pair of vertices has an edge between them and the $\ceil*{\log{x}}$ bicliques completely edge-cover the graph $G$.
\end{proof}

The following related corollary will be used later to prove the upper bound.

\begin{corollary}
\label{equalcover}
There exists a covering of $K_{2x}$ using $\ceil*{\log{2x}}$ subgraphs $K_{x,x}$.
\end{corollary}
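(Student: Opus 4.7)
The plan is to reuse the bitstring-labelling construction from Corollary \ref{ubcover}, but to choose the labels so that each coordinate is balanced, forcing every biclique to have components of size exactly $x$. Let $k = \ceil*{\log{2x}}$. As in the previous proof, I will assign distinct bitstrings of length $k$ to the $2x$ vertices; the $i$th biclique then consists of all edges between vertices whose $i$th bit is $0$ and vertices whose $i$th bit is $1$. Since any two distinct bitstrings differ in some position, the $k$ bicliques still form a complete edge-covering of $K_{2x}$.

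The new requirement is that for each coordinate $i$, exactly $x$ of the chosen bitstrings have a $0$ in position $i$ and exactly $x$ have a $1$; this is precisely the condition that each biclique be isomorphic to $K_{x,x}$. To force this, I will choose the $2x$ bitstrings in bitwise-complementary pairs. No bitstring equals its own complement, so the $2^k$ bitstrings of length $k$ partition into $2^{k-1}$ complementary pairs $\{b,\bar{b}\}$. Since $2x \le 2^k$ by definition of $k$, we have $x \le 2^{k-1}$, so I may pick any $x$ of these pairs, yielding $2x$ distinct bitstrings.

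For each coordinate $i$, a complementary pair $\{b,\bar{b}\}$ contributes one $0$ and one $1$ in position $i$, so $x$ such pairs contribute exactly $x$ zeros and $x$ ones overall. Applying the construction of Corollary \ref{ubcover} to these labels therefore produces $k = \ceil*{\log{2x}}$ bicliques that edge-cover $K_{2x}$ and each have components of size exactly $x$, so each is $K_{x,x}$, as required.

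I do not expect a serious obstacle here; the only step to verify is that the complementary-pair selection is feasible, which reduces to the inequality $x \le 2^{k-1}$ and follows immediately from $k = \ceil*{\log{2x}}$.
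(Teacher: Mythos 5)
Your proof is correct and rests on essentially the same idea as the paper's: both choose the $2x$ vertex labels as $x$ bitwise-complementary pairs of bitstrings so that every coordinate is balanced, making each of the $\ceil*{\log{2x}}$ bicliques a $K_{x,x}$. Your version is slightly more direct (you pick any $x$ complementary pairs of length-$\ceil*{\log{2x}}$ strings, using $x \le 2^{\ceil*{\log{2x}}-1}$, rather than building the pairs from a covering of $K_x$ as the paper does), but the mechanism is identical.
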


\begin{proof}
First, consider any minimal covering of a $K_{x}$ using the procedure in Corollary \ref{ubcover}, and the associated set of bitstrings $B$. Now consider $C$, the set of complements of these bitstrings (bitstrings are complements if they have the same length and do not have the same character at any index). Construct a set $B'$ of the same size as $B$ by appending a $0$ to the front of each bitstring in $B$. Construct a set $C'$ by appending a $1$ to the front of each bitstring in $C$. The intersection of $B'$ and $C'$ is clearly empty, so their union has $2x$ bitstrings. Now consider $A$, the union of $B'$ and $C'$. Because $B$ and $C$ are complements, for every index except the first, the number of strings with a $0$ at that index in $A$ is equal to the number of strings with a $1$ at that index. Finally, by the construction of $B'$ and $C'$, there are an equal number of $0$s and $1$s in the first index. It follows from here that $A$ encodes a covering of a $K_{2x}$ using $\ceil*{\log{x}} + 1 = \ceil*{\log{2x}}$ subgraphs $K_{x,x}$, and that $A$ is a minimal covering of a $K_{2x}$.
\end{proof}

With Lemma \ref{bccover} and Corollary \ref{ubcover}, we now present a proof that the second term in the asymptotic formula is a lower bound.

\begin{lemma}
\label{loglower}
If $m$ is the number of bicliques with component size $x$ needed to completely edge-cover a complete graph on $n$ vertices, then
\[m \ge \frac{n\log{2x}}{2x}.\]
\end{lemma}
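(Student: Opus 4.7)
The plan is to reduce the claim to Lemma \ref{bccover} in two short steps. Although Lemma \ref{bccover} is stated for a complete graph on $x$ vertices, its proof (the matrix argument combined with the convexity of $t \mapsto 2^t$) depends only on the vertex count; the very same argument, instantiated with $n$ vertices in place of $x$, shows that every biclique edge-covering of $K_n$ satisfies
\[\sum_{i=1}^{m} |B_i| \ge n \log n.\]

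Next, I would invoke the component-size cap: since every biclique in our cover has component size at most $x$, its total size satisfies $|B_i| \le 2x$, and hence $\sum_{i=1}^{m} |B_i| \le 2xm$. Chaining this with the previous inequality yields $2xm \ge n \log n$, and therefore $m \ge \frac{n \log n}{2x}$.

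Finally, in the only regime where the statement is meaningful we have $n \ge 2x$, so $\log n \ge \log (2x)$, giving the claimed $m \ge \frac{n \log(2x)}{2x}$. (If $n < 2x$ one instead appeals to Corollary \ref{ubcover} to get $m \ge \lceil \log n \rceil$, which dominates $\frac{n \log(2x)}{2x}$ because $y \mapsto (\log y)/y$ is decreasing on $[e, \infty)$; the author's weaker form $\log(2x)$ rather than $\log n$ in the bound is written purely to line up with the target asymptotic $(n/x)\log x$.) There is no substantive obstacle here: once one observes that the proof of Lemma \ref{bccover} uses only the vertex count and hence applies verbatim to $K_n$, the rest is a one-line combination of that lemma with the uniform size bound $|B_i| \le 2x$.
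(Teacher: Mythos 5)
Your proposal is correct and follows essentially the same route as the paper: apply Lemma \ref{bccover} with $n$ vertices to get $\sum |B_i| \ge n\log n$, divide by the per-biclique size $2x$, and weaken $\log n$ to $\log(2x)$ using $n \ge 2x$. Your extra care about the applicability of Lemma \ref{bccover} to $K_n$ and about the degenerate case $n < 2x$ is sound but not a substantive departure.
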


\begin{proof}
From Lemma \ref{bccover}, the sum of sizes of bicliques in a covering of $K_n$ with only bicliques must be at least $n\log{n}$. If all of the bicliques have component sizes $x$, then each biclique has total size $2x$, so there must be at least $\frac{n\log{n}}{2x}$ bicliques, which is at least $\frac{n\log{2x}}{2x}$.
\end{proof}

\begin{remark}
An alternate proof of Lemma \ref{loglower} motivates the construction of the upper bound but gives a weaker lower bound. 

Let $G$ be a $K_n$, edge-covered with bicliques $Y_1, Y_2, \dots, Y_m$, where each $Y_i$ is a $K_{x,x}$. Dividing $G$ into roughly $\frac{n}{x}$ disjoint sets $X_1, X_2, \dots, X_{\floor{\frac{n}{x}}}$ of roughly $x$ vertices each, we can apply Lemma \ref{bccover} and Corollary \ref{ubcover} to find that the sum of the sizes of the intersections of all $Y_i$ with a fixed $X_j$ is at least $x\log{x}$. Since the $X_j$ are disjoint, the sum of the sizes of all $Y_i$ can be shown to be at least $\floor*{\frac{n}{x}}x\log{x}$. Thus the number of bicliques in the edge-covering is at least $\frac{1}{2}\floor*{\frac{n}{x}}\log{x}$.
\end{remark}

Finally, we compute an upper bound on $m$ via construction, which is asymptotically equivalent to the lower bound. 

\begin{lemma}
\label{upperconstruct}
There exists a complete edge-covering of a complete graph on $n$ vertices using at most
\[4\binom {\ceil*{\frac{n}{2x}}} {2} + \ceil*{\frac{n}{2x}}\ceil*{\log{2x}}\]
complete bipartite graphs $K_{x,x}$ of component size $x$.
\end{lemma}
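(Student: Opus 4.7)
The plan is to reduce the problem to two simpler covering tasks: covering the edges inside a group of $2x$ vertices, and covering the edges between two such groups. First I would partition the vertex set of $K_n$ into $k = \ceil*{\frac{n}{2x}}$ disjoint groups $X_1, \dots, X_k$. If $n$ is not a multiple of $2x$, I would pad the graph by adding dummy vertices so that every group has exactly $2x$ vertices; any biclique covering of the enlarged complete graph $K_{2xk}$ restricts to a valid biclique covering of the original $K_n$, and the bound we derive does not depend on the dummy vertices.

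Next, for the intra-group edges I would invoke Corollary \ref{equalcover} directly: each $X_i$ carries a $K_{2x}$, which can be edge-covered by exactly $\ceil*{\log{2x}}$ bicliques $K_{x,x}$. Summing over the $k$ groups contributes $k\ceil*{\log{2x}}$ to the total, matching the second term in the stated bound.

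For inter-group edges, between any two groups $X_i$ and $X_j$ the bipartite subgraph of cross edges is a $K_{2x,2x}$. I would cover it by splitting each of $X_i$ and $X_j$ arbitrarily into two halves of size $x$, yielding the four natural $K_{x,x}$ bicliques between the halves, whose union is exactly $K_{2x,2x}$. Summed over all $\binom{k}{2}$ unordered pairs this contributes $4\binom{k}{2}$, matching the first term. Since every edge of $K_n$ is either internal to some $X_i$ or lies between two distinct groups, the two families together cover every edge of $K_n$, and the total biclique count is $4\binom{k}{2} + k\ceil*{\log{2x}}$ as required.

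The main obstacle here is cosmetic rather than structural: one has to be careful that the construction still attains the stated count when $n$ is not a multiple of $2x$. The padding trick above resolves this cleanly, and the ceilings appearing in the stated bound are precisely what absorbs the slack introduced by the dummy vertices. No delicate counting or case analysis beyond that is required; the heavy lifting has already been done in Corollary \ref{equalcover}, and the rest is just an organized union of coverings of complete and complete bipartite subgraphs.
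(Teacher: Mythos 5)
Your proposal is correct and follows essentially the same route as the paper: partition into $\ceil*{\frac{n}{2x}}$ groups of (at most) $2x$ vertices, cover each inter-group $K_{2x,2x}$ with four $K_{x,x}$, and cover each intra-group $K_{2x}$ with $\ceil*{\log{2x}}$ bicliques via Corollary \ref{equalcover}. Your explicit padding with dummy vertices is a small tidiness improvement over the paper's ``groups of at most $2x$ vertices,'' but it is the same argument.
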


\begin{proof}
To impose this upper bound on $m$, we present the following construction. Let $G$ be the complete graph on $n$ vertices we wish to cover.

Motivated by the second proof of Lemma \ref{loglower}, we partition the vertices of $G$ into $\ceil*{\frac{n}{2x}}$ disjoint groups of at most $2x$ vertices each. It takes four $K_{x,x}$ to connect every edge going between any two of the groups; there are $\ceil*{\frac{n}{2x}}$ groups, so $4\binom {\ceil*{\frac{n}{2x}}} {2}$ bipartite graphs can be used to cover every edge that goes between all pairs of vertices not in the same group. The remaining uncovered edges now only exist within groups of at most $2x$ vertices each. But by corollary \ref{equalcover}, we know that each of these can be edge-covered with $\ceil{\log{2x}}$ bipartite graphs each; since there are $\ceil*{\frac{n}{2x}}$ of these, our construction requires 
\[4\binom {\ceil*{\frac{n}{2x}}} {2} + \ceil*{\frac{n}{2x}}\ceil*{\log{2x}}\]
bipartite graphs to completely edge-cover the $K_n$ graph $G$. 
\end{proof}

With the upper and lower bounds shown, we prove the main result of this section.

\begin{proof}[Proof of Theorem \ref{mainresult}]
From Lemma \ref{n2x2lower} and the stronger proof of Lemma \ref{loglower}, we obtain 
\[\max{\left\{\frac{n(n-1)}{2x^2},\frac{n\log{2x}}{2x}\right\}} \le m. \]
From Lemma \ref{upperconstruct}, we obtain
\[m \le 4\binom {\ceil*{\frac{n}{2x}}} {2} + \ceil*{\frac{n}{2x}}\ceil*{\log{2x}}.\]
Both the lower and the upper bounds are $\Theta \left( \left(\frac{n}{x}\right)^2 + \left(\frac{n}{x}\right)\log{x} \right)$, as desired, proving the theorem.
\end{proof}

\section{Concluding Remarks}
\begin{itemize}
\item We can also easily show that for all $\epsilon > 0$, the upper and lower bounds are provably within a factor of $6+\epsilon$ of each other when $n$ is sufficiently large. Additionally, for all $\epsilon > 0$, the upper and lower bounds are provably within a factor of $2+\epsilon$ of each other when both $n$ and $\frac{n}{x}$ are sufficiently large. 
\item The second term of the asymptotic formula dominates when $x \gg \frac{n}{\log{n}}$, and the first term dominates when $x \ll \frac{n}{\log{n}}$. This is consistent for the expressions for the minimum number of $K_{1,1}$ and $K_{n/2, n/2}$ graphs needed to cover a $K_n$, respectively.
\end{itemize}

We would like to thank Brandon Tran for his guidance and mentorship, Evan Chen for introducing us to the result on unbounded biclique coverings of the complete graph, Po-Shen Loh for his advice, and the Center for Excellence in Education and the Research Science Institute at MIT for providing us with an opportunity for research.

\bibliography{bibliography}{}
\bibliographystyle{plain}

\end{document}